\begin{document}
\title[Regarding Kalton]{An example regarding Kalton's paper ``Isomorphisms between spaces of vector-valued continuous functions''           }
\author{F\'{e}lix Cabello S\'{a}nchez}
\thanks{Research supported in part by MICIN Project PID2019-103961GB-C21.}

\thanks{2020 {\it Mathematics Subject Classification}: 46A16, 46E10.}

\address{Departamento
de Matem\'{a}ticas and IMUEx, Universidad de Extremadura}
\address{Avenida de Elvas,
06071 Badajoz, Spain.} \email{fcabello@unex.es}
\date{}
\noindent {\footnotesize Version \today}

\bigskip

\bigskip

 \maketitle
\theoremstyle{plain}
\newtheorem{problem}{Problem}
\newtheorem{theorem}{Theorem}
\newtheorem{proposition}{Proposition}
\newtheorem*{corollary}{Corollary}
\newtheorem*{lemma}{Lemma}
\newtheorem*{question}{Question}
\theoremstyle{remark}
\newtheorem{definition}{Definition}
\newtheorem{example}{Example}
\newtheorem{examples}{Examples}
\newtheorem{remark}{Remark}
\newcommand{\R}{\mathbb{R}}
\newcommand{\N}{\mathbb{N}}
\newcommand{\K}{\mathbb{K}}
\newcommand{\e}{\varepsilon}

%%% Nuevos

\newcommand{\LH}{\mathcal L(\mathscr H)}
\newcommand{\Loo}{\ensuremath{{L_\infty }}}

\newcommand{\EL}{\ensuremath{\Ext_{L^\infty}}}

\newcommand{\To}{\ensuremath{\longrightarrow}}

\def\FS{\operatorname{FVLS}}
\def\FV{\operatorname{FVLV}}
\def\IV{\operatorname{IV}}
\def\Fp{\operatorname{F-$p$-B}}
\def\Hom{\operatorname{Hom}}
\def\PB{\operatorname{PB}}
\def\PO{\operatorname{PO}}
\def\El{\operatorname{Ext}}
\def\sign{\operatorname{sign}}
\def\dim{\operatorname{dim}}
\def\diam{\operatorname{diam}}
\def\supp{\operatorname{supp}}
\def\dens{\operatorname{dens}}
\def\co{\operatorname{co}}
\def\Bil{\operatorname{\mathfrak B}}
\def\Bal{\operatorname{Bal}}
\def\dist{\operatorname{dist}}
\def\M{\operatorname{M}}
\bibliographystyle{plain}
%\begin{abstract}

%\end{abstract}
%\bigskip

%\section{Introduction}

The paper alluded to in the title contains the following striking result \cite[Theorem 6.4]{k-pems}: 
Let $I$ be the unit interval and $\Delta$ the Cantor set. If $X$ is
a quasi Banach space containing no copy of $c_0$ which is isomorphic to
a closed subspace of a space with a basis and $C(I,X)$ is linearly homeomorphic to $C(\Delta, X)$, then $ X $ is locally convex, i.e., a Banach space.

Here $C(K,X)$ denotes the space of continuous functions $F:K\To X$. When $K$ is a compact space and $X$ a quasi Banach space $C(K,X)$ is also a quasi Banach space under the quasinorm $\|F\|=\sup\{\|F(t)\|: t\in K\}$.

When $X$ is a Banach space, the isomorphic theory of the spaces $C(K,X)$ is somehow {oversimplified} by Miljutin theorem (the spaces $C(K)=C(K,\mathbb R)$ for $K$ uncountable and metrizable are all mutually isomorphic) and, above all, by Grothendieck's identity $C(K,X)=C(K)\check{\otimes}_{\varepsilon} X$
which implies that the isomorphic type of the Banach space $C(K,X)$ depends only on those of $C(K)$ and $X$. The situation for quasi Banach spaces is more thrilling and actually some seemingly innocent questions remain open: Is $C(I,\ell_p)$ isomorphic to $C(I^2,\ell_p)$? Is $C(I,L_p)$ isomorphic to $C(\Delta,L_p)$? These appear as Problems 7.2 and 7.3 at the end of \cite{k-pems}. Problem 7.1, namely if $C(K)\otimes X$ (the subspace of functions whose range is contained in some finite dimensional subspace of $X$) is always dense in $C(K,X)$, was posed by Klee and is connected with quite serious mathematics. While it seems to be widely open for quasi Banach spaces $X$ the answer is negative for $F$-spaces (complete linear metric spaces) as shown by Cauty's celebrated example \cite{cauty} (see also \cite{d-k}) and affirmative for locally convex spaces. See Waelbroeck \cite[Section~8]{w} for a discussion on Klee's {\em density} problem.

The aim of this short note is much more modest: we will show that Kalton's result is sharp by exhibiting non-locally convex quasi Banach spaces $X$ with a basis for which $C(I,X)$ and $C(\Delta, X)$ are isomorphic. Our examples are rather specific and actually in all cases $X$ is isomorphic to $C(K,X)$ if $K$ is a metric compactum of finite covering dimension.
\medskip

Recall that the (Lebesgue) covering dimension  of a (not necessarily compact) topological space $K$ is the smallest number $n\geq 0$ such that every open cover admits a refinement in which every point of $K$ lies in the intersection of no more than $n+1$ sets of the refinement.

A quasi Banach space $X$ has the $\lambda$-approximation property ($\lambda$-AP) if for every $x_1,\dots, x_n\in X$ (or in some dense subset) there is a finite-rank operator $T$ on $X$ such that $\|T\|\leq\lambda$ and $\|x_i-Tx_i\|<\e$. We say that $X$ has the bounded approximation property (BAP) if it has the  $\lambda$-AP for some $\lambda\geq 1$.

We end these preliminaires by recalling that a $p$-norm, where $0<p\leq 1$, is a quasinorm satisfying the inequality $\|x+y\|^p\leq \|x\|^p+\|y\|^p$ and that every quasinormed space has an equivalent $p$-norm for some  
$0<p\leq 1$, so says the Aoki-Rolewicz theorem.

\begin{lemma}
If $K$ has finite covering dimension and $X$ has the BAP, then $C(K,X)$ has the BAP. 
\end{lemma}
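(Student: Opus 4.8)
The plan is to build the required finite-rank operator on $C(K,X)$ by combining a topological device---a partition of unity adapted to the covering dimension of $K$---with the approximation supplied by the BAP of $X$ in the fibre. By the Aoki--Rolewicz theorem I may assume that $X$ carries a $p$-norm for some $0<p\leq 1$, and I endow $C(K,X)$ with the associated sup-quasinorm, which is again a $p$-norm. Write $n=\dim K<\infty$ and let $\lambda\geq 1$ witness the BAP of $X$. I will produce, for any finite family $F_1,\dots,F_m\in C(K,X)$ and any $\e>0$, a finite-rank operator $S$ on $C(K,X)$ with $\|SF_i-F_i\|<\e$ for each $i$ and with $\|S\|$ bounded by a constant depending only on $\lambda$, $p$ and $n$; this is precisely the $\Lambda$-AP of $C(K,X)$ with $\Lambda=\lambda(n+1)^{(1-p)/p}$.

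For the construction, fix $F_1,\dots,F_m$ and $\e$. Since $K$ is compact, continuity of the $F_i$ provides a finite open cover of $K$ on each member of which every $F_i$ oscillates by less than a prescribed $\delta>0$. Using that $\dim K=n$, I refine this cover to a finite open cover $\{U_j\}_{j=1}^{N}$ of order at most $n+1$---every point of $K$ lying in at most $n+1$ of the $U_j$---still with oscillation $<\delta$ on each $U_j$; normality of $K$ then furnishes a subordinate partition of unity $\{\varphi_j\}$, so that $\supp\varphi_j\subseteq U_j$ and $\sum_j\varphi_j\equiv 1$. Choosing a point $t_j\in U_j$ for each $j$, I apply the BAP of $X$ to the finite set of values $\{F_i(t_j): 1\leq i\leq m,\ 1\leq j\leq N\}$ to obtain a finite-rank operator $T$ on $X$ with $\|T\|\leq\lambda$ and $\|TF_i(t_j)-F_i(t_j)\|<\delta$ for all $i,j$. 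Finally I set
\[
(SF)(t)=\sum_{j=1}^N\varphi_j(t)\,T(F(t_j)).
\]
This $S$ is linear and bounded---it factors as evaluation at the points $t_j$, followed by $T$ in each coordinate, followed by synthesis against the $\varphi_j$---and it has finite rank because its range lies in the finite-dimensional span of the functions $\varphi_j(\cdot)\,e$, with $e$ ranging over a basis of $T(X)$.

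The estimates for $\|S\|$ and for $\|SF_i-F_i\|$ rest on one inequality. At a fixed $t\in K$ only those indices $j$ with $t\in U_j$ contribute to the sum, and there are at most $n+1$ of them; since $\varphi_j(t)\in[0,1]$, $\sum_j\varphi_j(t)=1$ and $u\mapsto u^p$ is concave, Jensen's inequality yields $\sum_j\varphi_j(t)^p\leq(n+1)^{1-p}$. Combined with the $p$-subadditivity of the quasinorm and $\|T\|\leq\lambda$, this gives at once $\|SF\|\leq\lambda(n+1)^{(1-p)/p}\|F\|$. For the approximation I write $F_i(t)=\sum_j\varphi_j(t)F_i(t)$ and estimate
\[
\|(SF_i)(t)-F_i(t)\|^p\leq\sum_j\varphi_j(t)^p\,\|TF_i(t_j)-F_i(t)\|^p,
\]
splitting each inner term by $p$-subadditivity as $\|TF_i(t_j)-F_i(t_j)\|^p+\|F_i(t_j)-F_i(t)\|^p<2\delta^p$; the first summand is small because $T$ approximates the sampled values, the second because $\varphi_j(t)\neq0$ forces $t\in U_j$, where the oscillation of $F_i$ is below $\delta$. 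Hence $\|SF_i-F_i\|\leq 2^{1/p}\delta\,(n+1)^{(1-p)/p}$, which is $<\e$ as soon as $\delta$ is chosen small enough.

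The genuinely substantive point---and the only place where the hypothesis on $K$ enters---is the inequality $\sum_j\varphi_j(t)^p\leq(n+1)^{1-p}$, which is forced on us because $p<1$ renders the quasinorm merely $p$-subadditive. In the Banach case $p=1$ the overlaps of the partition of unity cost nothing, since $\sum_j\varphi_j(t)=1$, and no dimension restriction is needed; for $p<1$ one must cap the number of simultaneously active pieces, and a cover of order at most $n+1$ does exactly that. Everything else---continuity on the compactum, existence of subordinate partitions of unity, and the finite-rank and boundedness checks for $S$---is routine.
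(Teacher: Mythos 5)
Your proof is correct and follows essentially the same route as the paper: a partition of unity subordinate to a refined cover of order $n+1$, sampling at points $t_j$ composed with a finite-rank operator supplied by the BAP of $X$, and the key estimate $\sum_j\varphi_j(t)^p\leq(n+1)^{1-p}$, yielding the same constant $\lambda(n+1)^{1/p-1}$. The only real difference is that you approximate arbitrary $F_i\in C(K,X)$ directly via their uniform continuity, whereas the paper first reduces to elementary tensors $f_i\otimes y_i$ by citing Shuchat's density theorem; your version is self-contained on that point but otherwise identical in substance.
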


\begin{proof}
We first observe that if $K$ has finite covering dimension {\em or} $X$ has the BAP, then $C(K)\otimes X$ is dense in $C(K,X)$.
The part concerning the BAP is obvious; the other part is a result by Shuchat \cite[Theorem 1]{s-pams}.

Given $g\in C(K)$ and $x\in X$ we denote by $g\otimes x$ the function $t\longmapsto g(t)x$.
Since every function in $C(K)\otimes X$ can be written as a finite sum $\sum_{i} g_i\otimes x_i$ with $g_i\in C(K), x_i\in X$ (which justifies our notations, see \cite[Proposition~1]{s-pams})
it suffices to see that there is a constant $\Lambda$ such that, given $f_1,\dots,f_m\in C(K), y_1,\dots,y_m\in X$ and $\e>0$ there is a finite-rank operator $T$ on $C(K,X)$ such that $\|T\|\leq \Lambda$ and 
$\|f_i\otimes y_i - T(f_i\otimes y_i)\|<\e$. As $\e$ is arbitrary there is no loss of generality in assuming that $\|f_i\|=\|y_i\|=1$ for $1\leq i\leq m$.

Take an open cover $U_1,\dots,U_r$ of $K$ such that for every $i,j$ one has $|f_i(s)-f_i(t)|<\e$ for all $s,t\in U_j$.
%Let $\mathscr U$ be an open covering of $K$ so that for every $U\in\mathscr U$ and $1\le i\leq m$ one has $\diam f_i[U]<\e$.
Put $n=\dim(K)$ and 
take a refinement $V_1,\dots, V_k$ so that each point of $K$ lies in no more than $n+1$ of those sets. Finally, let $\phi_1, \dots, \phi_k$ be a partition of unity of $K$ subordinate to $V_1,\dots, V_k$.

For each $j$, pick $t_j\in V_j$ and define an operator $L$ on $C(K,X)$ by letting $L(F)=\sum_{j\leq k} \phi_j\otimes F(t_j)$, that is, 
$(LF)(t)=\sum_{j\leq k} \phi_j(t) F(t_j)$. Let us estimate $\|L\|$ assuming $X$ is $p$-normed: one has
\begin{align*}
\|L(F)\|= \sup_{t\in K}\Big\|\sum_{j\le k}  \phi_j(t) F(t_j)\Big\|,
\end{align*}
but for each $t\in K$ the sum has no more than $n+1$ nonzero summands, so
\begin{gather*}
\Big\|\sum_{j\le k}  \phi_j(t) F(t_j)\Big\| \leq \Big( \sum_{j\le k}  \phi_j(t)^p \|F(t_j)\|^p\Big)^{1/p}\leq \|F\| \Big( \sum_{j\le k}  \phi_j(t)^p \Big)^{1/p}\\
\leq \|F\|\|\,{\bf I}: \ell_1^{n+1}\To \ell_p^{n+1}\|=(n+1)^{1/p -1}\|F\|
\end{gather*}
We claim that $\|f_i\otimes y_i - L(f_i\otimes y_i)\|\le\e$ for all $i$. We have
$
L(f_i\otimes y_i)(t)=\sum_{j\leq k}f_i(t_j)\phi_j(t) y_i
$,
hence
$$\|f_i\otimes y_i - L(f_i\otimes y_i)\|= \Big\|\sum_{j\leq k} f_i\phi_j -\sum_{j\leq k} f_i(t_j)\phi_j\Big\|\|y_i\| = \Big\|\sum_{j\leq k} f_i\phi_j -\sum_{j\leq k} f_i(t_j)\phi_j\Big\|.
$$
For each $j$ and each $t\in K$ one has $|f_i(t)\phi_j(t)-f_i(t_j)\phi_j(t)|\le\e \phi_j(t)$: this is obvious if $t\notin V_j$ since in this case $\phi_j(t)=0$, while for $t\in V_j$ we have $|f_i(t)-f_i(t_j)|\le\e 
$ by our choice of $V_1,\dots, V_k$ and thus
$$
\Big|\sum_{j\leq k} f_i(t)\phi_j(t) -\sum_{j\leq k} f_i(t_j)\phi_j(t)\Big|
\le\e \sum_{j\leq k}\phi_j(t)=\e
$$
holds for all $t\in K$; consequently we have
$$
\Big\|\sum_{j\leq k} f_i\phi_j -\sum_{j\leq k} f_i(t_j)\phi_j\Big\|\leq \e.$$

Let $R$ be a finite-rank operator on $X$ such that $\|y_i-R(y_i)\|<\e$, with $\|R\|\leq\lambda$, where $\lambda$ is the ``approximation constant'' of $X$, and define $T$ on $C(K,X)$ by
$
(TF)(t)=R((LF)(t))
$.
Clearly $T$ has finite-rank since for an elementary tensor $f\otimes x$ one has
$$
T(f\otimes x)= \sum_{j\leq k} f(t_j) \phi_j\otimes R(x).
$$
Finally let us estimate $\|f_i\otimes y_i - T(f_i\otimes y_i)\|$. 
Write
$$
f_i\otimes y_i - T(f_i\otimes y_i)%=f_i\otimes y_i - RL(f_i\otimes y_i)
= f_i\otimes y_i - f_i\otimes R(y_i) + f_i\otimes R(y_i) -
\sum_{j\leq k} f_i(t_j) \phi_j\otimes R(y_i)
$$
 and then
\begin{gather*}
\| f_i\otimes y_i - T(f_i\otimes y_i)\|^p\leq 
\| f_i\otimes y_i - f_i\otimes R(y_i)\|^p + \Big\|f_i\otimes R(y_i) -
\sum_{j\leq k} f_i(t_j) \phi_j\otimes R(y_i) \Big\|^p\\
= 
\| f_i\|^p\|y_i - R(y_i)\|^p + \Big\|f_i -
\sum_{j\leq k} f_i(t_j)  \Big\|^p \| R(y_i) \|^p \leq \e^p+ \e^p \lambda^p,
\end{gather*} 
so that $C(K,X)$ has the BAP with constant at most  $\lambda(n+1)^{1/p -1}$.
\end{proof}

The proof raises the question of whether the lemma is true for, say, the Hilbert cube $I^\omega$.

The other ingredient we need is a {\em complementably universal} space for the BAP. A separable $p$-Banach space is complementably universal for the BAP if it has the BAP and contains a complemented copy of each separable $p$-Banach space with the BAP. The existence of such spaces (one for each $0<p<1$) was first mentioned by Kalton himself in \cite[Theorem 4.1(b)]{k-sm}. A complete proof appears in the {\em related issues} of  \cite{ccm}. In any case, it easily follows from the Pe\l czy\'nski decomposition method that any two separable $p$-Banach spaces complementably universal for the BAP are isomorphic, so let us denote by $\mathscr K_p$ the isomorphic type of such specimens and observe that since each separable $p$-Banach space with the BAP is complemented in one with a basis, it follows that $\mathscr K_p$ does have a basis. Needless to say, $\mathscr K_p$ is not locally convex since it contains a complemented copy of $\ell_p$.

\begin{corollary}
If $K$ is a (non-empty) metrizable compactum of finite covering dimension, then $C(K,\mathscr K_p)$ is linearly homeomorphic to $\mathscr K_p$. In particular,  $C(I,\mathscr K_p)$ and $C(\Delta,\mathscr K_p)$  are linearly homeomorphic although $\mathscr K_p$ is not locally convex.
\end{corollary}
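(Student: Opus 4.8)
The plan is to show that $C(K,\mathscr K_p)$ is \emph{itself} a separable $p$-Banach space that is complementably universal for the BAP, and then to invoke the uniqueness (up to isomorphism) of such spaces noted above. In other words, rather than comparing $C(K,\mathscr K_p)$ with $\mathscr K_p$ directly, I would verify that it satisfies the defining properties of $\mathscr K_p$ and let the Pe\l czy\'nski decomposition method --- already packaged into that uniqueness statement --- do the work.

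Concretely, three things have to be checked. First, $C(K,\mathscr K_p)$ is a separable $p$-Banach space: the quasinorm $\|F\|=\sup_{t\in K}\|F(t)\|$ is a $p$-norm because the one on $\mathscr K_p$ is, and separability follows from the metrizability of the compactum $K$ together with the separability of $\mathscr K_p$. Second, $C(K,\mathscr K_p)$ has the BAP: this is exactly the content of the Lemma, since $K$ has finite covering dimension and $\mathscr K_p$ has the BAP. Third, and this is the crux, $C(K,\mathscr K_p)$ must contain a complemented copy of every separable $p$-Banach space $Z$ with the BAP. Here I would use that $K$ is non-empty: fixing $t_0\in K$, the constant functions form an isometric copy of $\mathscr K_p$ in $C(K,\mathscr K_p)$ onto which $F\mapsto F(t_0)$ (read as a constant function) is a norm-one projection, so $\mathscr K_p$ is $1$-complemented in $C(K,\mathscr K_p)$. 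Since $Z$ is by definition complemented in $\mathscr K_p$, and ``being isomorphic to a complemented subspace of'' is transitive, $Z$ is complemented in $C(K,\mathscr K_p)$ as well.

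These three facts say precisely that $C(K,\mathscr K_p)$ is complementably universal for the BAP, so by the quoted uniqueness it is linearly homeomorphic to $\mathscr K_p$. The final assertion then follows at once: both the interval $I$ and the Cantor set $\Delta$ are non-empty metrizable compacta of finite covering dimension (indeed $\dim I=1$ and $\dim\Delta=0$), whence $C(I,\mathscr K_p)\cong\mathscr K_p\cong C(\Delta,\mathscr K_p)$; and $\mathscr K_p$ is not locally convex because it contains a complemented copy of $\ell_p$.

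I do not expect a serious obstacle, since the genuinely substantial input --- that $C(K,\mathscr K_p)$ inherits the BAP --- is supplied by the Lemma, and the decomposition method is hidden inside the uniqueness of complementably universal spaces. The only point deserving care is the transitivity of complementation used in the third step; should one prefer to avoid appealing to uniqueness, the same conclusion can be reached by applying the decomposition method directly to the mutually complemented pair $\mathscr K_p$, $C(K,\mathscr K_p)$, which in turn requires the self-similarity $\mathscr K_p\cong\ell_p(\mathscr K_p)$ --- itself a consequence of the fact that $\ell_p(\mathscr K_p)$ is again complementably universal for the BAP.
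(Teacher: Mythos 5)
Your proposal is correct and takes essentially the same route as the paper: the paper's one-line proof consists precisely of the observation that $C(K,\mathscr K_p)$ is separable, has the BAP (by the Lemma), and contains $\mathscr K_p$ complemented as the constant functions, which is exactly the three-point verification you carry out before invoking the uniqueness of complementably universal spaces. The only difference is that you make explicit the transitivity-of-complementation step and the alternative direct Pe\l czy\'nski decomposition argument, both of which the paper leaves implicit.
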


\begin{proof}
This clearly follows from the lemma since $C(K,\mathscr K_p)$ is separable, has the BAP and contains $\mathscr K_p$ complemented as the subspace of constant functions.
\end{proof}

We do not know of any other nonlocally convex quasi Banach space $X$ for which $C(I,X)$ and $C(\Delta, X)$ are isomorphic, apart from the obvious ones arising as direct sums of $\mathscr K_p$ and Banach spaces lacking the BAP. An obvious candidate is the $p${\em-Gurariy space}, introduced by Kalton in \cite[Theorem 4.3]{k-trans} and further studied in \cite{CGK}. Note that if $X$ is a quasi Banach space isomorphic to $X\oplus F$, with $F$ finite dimensional and $C(I,X)$ and $C(\Delta, X)$ are not isomorphic then neither are $C(I,X\oplus c_0)$ and $C(\Delta, X\oplus c_0)$.

It's time to leave.
Perhaps the most important question regarding the general topological properties of quasi Banach spaces is to know whether every quotient operator $Q:Z\To X$ (acting between quasi Banach spaces) admits a continuous section, namely  a continuous $\sigma: X\To Z$ such that $Q\circ \sigma={\bf I}_X$. More generally, let us say that $f\in C(K,X)$ lifts through $Q$ if there is $F\in C(K,Z)$ such that $f=F\circ Q$.
Now, given $0<p<1$, a quotient operator between $p$-Banach spaces $Q:Z\To X$ and a compactum $K$, consider the following statements:
\begin{enumerate}
\item $Q$ admits a continuous section.
\item Every continuous $f:K\To X$ has a lifting to $Z$.
\item $C(K)\otimes X$ is dense in $C(K,X)$.
%\item For every compact subset $S\subset X$ there is a compact subset $T\subset Z$ such that $Q[T]=S$.
\end{enumerate}
Clearly, (1)$\implies$(2): set $F=\sigma\circ f$, where $\sigma$ is the hypothesized section of $Q$.
Besides, if (1) is true for some quotient map $\ell_p(J)\To X$ then so it is for every $Q$. Similarly, if (2) is true for a given $K$ for some quotient map $\ell_p(J)\To X$, then it is true for any quotient map onto $X$ and (3) holds.

Following (badly) Klee \cite[Section~2]{klee}, let us say that the pair $(K,X)$ is {\em admissible} if (3) holds, that $K$ is admissible if (3) holds for every quasi Banach space $X$ and that $X$ is admissible if (3) holds for every compact $K$. We do not know whether the $p$-Gurariy spaces are admissible or not.

We have mentioned Shuchat's result that every compactum of finite covering dimension is admissible. Actually one can prove that (2) holds for any $Q$ if $\dim(K)<\infty$. This indeed follows from Michael's \cite[Theorem 1.2]{michael} but a simpler proof can be given using Shuchat's result, the argument of the proof of the lemma, and the open mapping theorem. Since every metrizable compactum is the continuous image of $\Delta$ this implies that for every compact subset $S\subset X$ there is a compact subset $T\subset Z$ such that $Q[T]=S$.

Long time ago, Riedrich proved that  the spaces $L_p$ are admissible for $0\leq p<1$; see \cite{i, capo} for more general results that cover all modular function spaces. We do not know if {\em the} quotient map $\ell_p\To L_p$ has a continuous section or satisfies (2) for arbitrary compact $K$ and $0<p<1$.

\end{document}